\title{The stable regularity lemma revisited}
\date{\today}
\author{Maryanthe Malliaris\thanks{Partially supported by NSF grant DMS-1300634 and a Sloan fellowship  }\\University of Chicago \and Anand Pillay\thanks{Partially supported by NSF grant DMS-1360702}\\University of Notre Dame}
\newtheorem{Theorem}{Theorem}[section]
\newtheorem{Remark}[Theorem]{Remark}
\newtheorem{Lemma}[Theorem]{Lemma}
\newtheorem{Fact}[Theorem]{Fact}
\begin{document}
\maketitle

\begin{abstract} We prove a regularity lemma  with respect to  arbitrary Keisler measures $\mu$ on $V$, $\nu$ on $W$ where the bipartite graph $(V,W,R)$ is definable in a saturated structure ${\bar M}$ and the formula $R(x,y)$ is stable.  The proof is rather quick, making use of  local stability theory. The special case where $(V,W,R)$ is pseudofinite, $\mu$, $\nu$ are the counting measures, and ${\bar M}$ is suitably chosen (for example a nonstandard model of set theory), yields the stable regularity theorem of  \cite{MS}, though without explicit bounds or equitability. 
\end{abstract}

\section{Introduction}
We refer to \cite{MS} for a discussion of Szemer\'edi's Regularity Lemma and various elaborations on it. 
Our context  is a saturated model ${\bar M}$ of an arbitrary theory,   and a definable (with parameters) bipartite graph $(V,W,R)$ where the edge relation $R(x,y)$ is  stable. Stability of $R$ means that for some $k$ there do not exist $a_{i}\in V$, $b_{i}\in W$ for $i\leq k$ such that $R(a_{i},b_{j})$ holds iff $i\leq j$.  We also have global Keisler measures $\mu_{x}$ on $V$ and $\nu_{y}$ on $W$ (i.e. finitely additive probability measures on the Boolean algebras of definable subsets of $V,W$ respectively).  
By a {\em $\Delta$-formula} we mean a finite Boolean combination of things like $R(x,b)$ and $x=a$, and by a $\Delta^{*}(x,y)$-formula we mean a finite Boolean combination of things like $R(a,y)$ and $y = b$.  We don't actually need the $=$ formulas but it is convenient to include them. By convention the $x$ variable is restricted to $V$ and $y$ variable to $W$ so by definition a $\Delta$-formula ($\Delta^{*}$-formula)  defines a subset of $V$ ($W$).  

\begin{Theorem}  In the context above, namely a definable graph $(V,W,R)$ in a saturated structure ${\bar M}$, where $R(x,y)$ is stable, and Keisler measures $\mu$ on $V$, $\nu$ on $W$, we have the following:  Given $\epsilon > 0$,   we can partition $V$ into finitely many definable sets $V_{1},..,V_{m}$ each defined by a $\Delta$-formula,  and also partition $W$ into finitely many  $W_{1},..,W_{m}$ each defined by a $\Delta^{*}$-formula, such that for each $V_{i}, W_{j}$ exactly one of the following holds:
\newline
(i)  for all $a\in V_{i}$ outside a set of $\mu$ measure $\leq \epsilon \mu(V_{i})$, for all $b\in W_{j}$ outside a set of
$\nu$ measure $\leq \epsilon \nu(W_{j})$, we have $R(a,b)$, AND DUALLY for all $b\in W_{j}$ outside a set of
$\nu$ measure $\leq \epsilon \nu(W_{j})$, for all $a\in V_{i}$ outside a set of $\mu$ measure $\leq \epsilon \mu(V_{i})$,  we have $R(a,b)$ holds, or
\newline
(ii) for all $a\in V_{i}$ outside a set of $\mu$ measure $\leq \epsilon \mu(V_{i})$, for all $b\in W_{j}$ outside a subset of
$\nu$ measure $\leq \epsilon \nu(W_{j})$, we have $\neg R(a,b)$, AND DUALLY for all $b\in W_{j}$ outside a subset of
$\nu$ measure $\leq \epsilon \nu(W_{j})$, for all $a\in V_{i}$ outside a set of $\mu$ measure $\leq \epsilon \mu(V_{i})$, $\neg R(a,b)$. 
\end{Theorem}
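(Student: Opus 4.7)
The plan is to use local stability theory of the stable formula $R(x,y)$ in two ways: first, a rank/iteration argument to construct the partitions of $V$ and $W$ as atoms of Boolean algebras generated by finitely many $\Delta$- and $\Delta^*$-formulas; second, the Fubini-type symmetry for Keisler measures on stable formulas, to obtain the ``AND DUALLY'' clauses essentially for free. Throughout I use that stability of $R$ gives a bounded (Shelah) $2$-rank on $\Delta$-formulas, and that complete $\Delta$-types (resp.\ $\Delta^*$-types) over $\bar M$ are definable by $\Delta^*$-formulas (resp.\ $\Delta$-formulas).

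Fix $\epsilon > 0$. I would first build, iteratively, parameters $b_1, \ldots, b_n \in W$ such that the nonempty atoms $V_1, \ldots, V_m$ of the Boolean algebra generated by $R(x,b_1), \ldots, R(x, b_n)$ (all $\Delta$-formulas) satisfy the following: for every $i$ and every $b \in W$, either $\mu(V_i \cap R(x,b)) \ge (1-\epsilon)\mu(V_i)$ or $\mu(V_i \cap \neg R(x,b)) \ge (1-\epsilon)\mu(V_i)$. The construction is greedy: start with the trivial partition, and while some cell fails this condition for some $b$, add that $b$ to the parameter list and refine. Symmetrically, build $a_1, \ldots, a_{n'} \in V$ and the atoms $W_1, \ldots, W_{m'}$ of the $\Delta^*$-Boolean algebra generated by $R(a_1, y), \ldots, R(a_{n'}, y)$.

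Next, to verify the regularity condition on a pair $(V_i, W_j)$: by construction, for every $b \in W_j$ one of $\mu(V_i \cap R(x,b))/\mu(V_i)$, $\mu(V_i \cap \neg R(x,b))/\mu(V_i)$ is $\ge 1-\epsilon$. Crucially, the set of $b$ for which the first alternative holds is defined by a $\Delta^*$-formula: this is the content of definability of $\mu$ on $\Delta^*$-formulas, itself a consequence of local stability (every $\Delta$-type is $\Delta^*$-definable, so the ``measure of $R(x,b)$ is large'' condition in $b$ is approximable by $\Delta^*$-formulas). After refining $\{W_1, \ldots, W_{m'}\}$ by intersecting with this single $\Delta^*$-definable set, each $W_j$ lies entirely in Case~(i) or Case~(ii) of the theorem. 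The ``AND DUALLY'' clauses, with the roles of $a$ and $b$ reversed, follow from the symmetric construction in Step~2 together with the Fubini-type symmetry for Keisler measures on stable formulas: the $\nu$-measure of the dually-exceptional set can be compared to the $\mu$-measure of the original exceptional set via the stable Fubini theorem, so one small measure forces the other.

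The main obstacle I anticipate is termination of the iterative refinement. In the purely combinatorial setting of \cite{MS}, one extracts an explicit bound from such an argument; here, in the general Keisler-measure setting, termination will follow from boundedness of the Shelah $2$-rank of $\Delta$-formulas, via a rank-decrease argument adapted to measures (each refining step witnesses a strict decrease in a rank-like quantity assigned to the partition). A secondary subtlety is keeping the partitions strictly $\Delta$-definable on $V$ and $\Delta^*$-definable on $W$ after the case-splitting refinement; this is handled by verifying that the ``definition of the case'' invoked above is itself a $\Delta^*$-formula (and dually), which is exactly what local stability supplies.
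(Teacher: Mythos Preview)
Your greedy refinement does not terminate in general, and this is a genuine gap. Consider $\bar M$ a saturated model with an equivalence relation $E$ having infinitely many infinite classes; take $V=W=\bar M$ and $R=E$, which is stable. Fix classes $C_1,C_2,\ldots$ and let $\mu=\nu$ assign mass $2^{-k}$ to $C_k$. At stage $0$ the cell $V$ is bad: for $b\in C_1$ one has $\mu(E(x,b))=\tfrac12$. Splitting by some $b_1\in C_1$ yields $\{C_1,\,V\setminus C_1\}$; the second cell is again bad, witnessed by any $b_2\in C_2$, since $\mu\bigl((V\setminus C_1)\cap E(x,b_2)\bigr)=\tfrac12\,\mu(V\setminus C_1)$. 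Iterating, the tail cell $V\setminus(C_1\cup\cdots\cup C_n)$ is bad for every $n$, so the process never stops. No ``rank-like quantity'' decreases here: the $\Delta$-rank of the bad cell stays equal to $R_\Delta(V)$ throughout, and the Shelah $2$-rank only bounds the depth of a \emph{full} binary tree of consistent instances, which your process does not produce.

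The paper avoids this by first passing to a small model $M$ over which $\mu|\Delta$ does not fork (Fact~1.4), and then partitioning $V$ so that each piece $V_i$ is concentrated, up to $\epsilon\mu(V_i)$, on a \emph{single} complete $\Delta$-type $p_i$ over $M$ (Lemma~2.1, by induction on $R_\Delta$). The key device, visible in Cases~(ii) and~(iii) of that lemma, is to \emph{absorb} the small-measure tail (in the example above, all the $C_k$ with $k$ large) into one already-large piece rather than trying to split it further---something your greedy scheme cannot do. The case analysis on a pair $(V_i,W_j)$ is then decided by whether the $R$-definition $\psi(y)$ of $p_i$ lies in $q_j$; this is automatically a single $\Delta^*$-formula over $M$, so the definability-of-the-case issue you flag disappears, and the dual clause comes from symmetry of nonforking (the $R$-definition of $p_i$ lies in $q_j$ iff that of $q_j$ lies in $p_i$), not from a Fubini argument, which would only recover the dual clause with a worse constant.
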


\begin{Remark} We obtain $\delta$-regularity for suitable $\delta$ and in a suitable sense, of each of the $(V_{i},W_{j},R|(V_{i}\times W_{j}))$ (as essentially in Claim 5.17 of \cite{MS}):  Take 
$\delta = \sqrt{2\epsilon}$, and suppose we are in case (i) of the conclusion of the theorem. Suppose $A\subseteq V_{i}$ is definable (not necessarily 
by a $\Delta$-formula), and $B\subseteq W_{j}$ is definable (not necessarily by a $\Delta^{*}$-formula), with $\mu(A)\geq \delta\mu(V_{i})$ and 
$\nu(B) \geq \delta\nu(W_{j})$. Then  for all $a\in A$ outside a set of $\mu$-measure $\leq \delta\mu(A)$, for all $b\in B$ outside a set of $\nu$-measure $\leq \delta\nu(B)$ we have $R(a,b)$. 
\end{Remark}

\begin{Remark} We also recover the regularity lemma for finite stable graphs in the sense of \cite{MS} by considering the counting measure on  pseudofinite graphs. (But without explicit  bounds or equitability of the partition.)
\end{Remark}
\noindent
{\em Explanation.}  Let $(V_{i},W_{i}, R_{i})$ be a family of finite graphs where the relation $R_{i}$ does not have the $k$-order property for fixed $k$. Take an infinite ultraproduct, to obtain a definable nonstandard finite graph $(V,W,R)$ in a (saturated) nonstandard model of set theory ${\bar M}$. Then $R(x,y)$ is stable. 
Let $\mu$ be the counting measure on $V$, namely for definable (in ${\bar M})$ $A\subseteq V$, define $\mu(A)$ to be the standard part of $|A|/|V|$, and likewise for $\nu$ on $W$. Then $\mu$, $\nu$ are global Keisler measures and we can apply the theorem.  Note that the definable sets in ${\bar M}$ are the internal sets in the sense of nonstandard analysis, so the standard compactness arguments give regularity in the finite situation with respect to arbitrary subsets (remembering the parenthesis). 

\vspace{2mm}
\noindent
Our model theory notation is standard. Stability theory is due to Shelah and the basic reference is \cite{Shelah}. For the purposes of the current paper it is convenient to refer to Chapter 1 of \cite{Pillay}. We will be in particular making use of ``local stability theory" namely the theory of forking for complete $\Delta$-type $p(x)$ where $\Delta$ is a finite set of stable formulas $\phi(x,y)$, and where a complete $\Delta$-type over a model $M$ is given (axiomatized)  by a maximal consistent set of formulas of the form $\phi(x,b)$, $\neg \phi(x,b)$ for $\phi(x,y)\in \Delta$ and $b\in M$.  The $\Delta$-rank refers to $R_{\Delta}(-)$ as in Section 1.3 of \cite{Pillay}, which is denoted $R(-,\Delta, \omega)$ in \cite{Shelah}. By a $\Delta$-formula we mean a finite Boolean combination of formulas $\phi(x,b)$ for $\phi\in \Delta$.   Recall that a formula $\phi(x,b)$ divides over $A$ if there an $A$-indiscernible sequence $b = b_{0}, b_{1}, b_{2}...$ such that $\{\phi(x,b_{i}):i < \omega\}$ is inconsistent. As in this context we will be concerned with stable formulas, we use the expression ``forking" to mean dividing.

Keisler measures are of course important in this paper. If ${\bar M}$ is a saturated model and $V$ a sort, a Keisler measure $\mu$ on $V$ is a finitely additive probability measure on the Boolean algebra of definable (with parameters) subsets of $V$. If we are only concerned with subsets definable over $M$ we call $\mu$ a Keisler measure on $V$ over $M$.  If $\Delta$ is a collection of $L$-formulas $\phi(x,y)$ ($x$ fixed of sort $V$, $y$ arbitrary), we can restrict $\mu$ to $\Delta$  formulas and call it $\mu|\Delta$. Likewise $\mu|(\Delta,M)$ is the restriction of $\mu$ to $\Delta$-formulas over $M$.  We will  use freely facts (from \cite{Keisler}) such that if $\mu$ is a global Keisler measure on $V$ say, then for any type-definable subset $X$ of $V$, type-defined over a small set of parameters (equivalently any partial type $\Sigma$ over a small set of parameters), $\mu(X)$ is defined and is approximated from above  by the $\mu$- measure of definable sets (formulas in $\Sigma$).

We depend  somewhat on Keisler's seminal paper \cite{Keisler}, which is very much concerned with Keisler measures  $\mu|\Delta$ for $\Delta$ a finite set of stable formulas. A key observation (Lemma 1.7 of \cite{Keisler}), is that such $\mu|\Delta$ is a weighted sum of complete $\Delta$-types (the proof of which  is more or less repeated in Lemma 2.1 in the next section).

 We will need the following, which  after a suitable translation is Proposition 1.20 of \cite{Keisler}: 
\begin{Fact} Suppose $\Delta$ is a finite set of stable formulas and $\mu_{x}$ is a global Keisler measure. Then there is a small model $M$ such that $\mu|\Delta$ does not fork over $M$ in the sense that each $\Delta$-formula $\psi(x)$ of positive $\mu$-measure does not fork over $M$.
\end{Fact}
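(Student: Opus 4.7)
The plan is to combine two ingredients: (a) the key observation (Keisler's Lemma 1.7, reproven as Lemma 2.1 in the next section) that $\mu|\Delta$ is a weighted sum of complete $\Delta$-types, and (b) the standard local-stability fact that each complete $\Delta$-type over $\bar M$ is definable, hence does not fork over a small parameter set. First, apply (a) to write
\[
\mu|\Delta \;=\; \sum_{i \in I} \alpha_i\, p_i,
\]
where the $p_i$ are distinct complete $\Delta$-types over $\bar M$ and $\alpha_i > 0$; since $\sum_i \alpha_i \leq 1$, the index set $I$ is countable.

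Second, for each $i \in I$ let $C_i \subset \bar M^{\mathrm{eq}}$ be a small set (in fact finite in the local stable setting, consisting of the parameters in the $\phi$-definitions of $p_i$ as $\phi$ ranges over the finite set $\Delta$) over which $p_i$ does not fork. Let $M$ be a small elementary submodel of $\bar M$ containing $\bigcup_{i \in I} C_i$; such $M$ exists by saturation of $\bar M$ since a countable union of small sets is small. For each $i$, since $p_i$ does not fork over $M$, every formula appearing in $p_i$ is non-forking over $M$.

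Third, let $\psi(x)$ be any $\Delta$-formula with $\mu(\psi) > 0$. Each $p_i$, being a complete $\Delta$-type, is $\{0,1\}$-valued on $\Delta$-formulas, so the decomposition specializes to
\[
\mu(\psi) \;=\; \sum_{i \in I} \alpha_i \cdot [\psi \in p_i].
\]
Positivity of the left side forces some $i \in I$ with $\psi \in p_i$, and consequently $\psi$ does not fork over $M$.

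The main delicacy is really packaged into step (a): without the stability of $\Delta$, one would only have an integral against a possibly continuous measure on the $\Delta$-type space, and no single small parameter set need control all formulas of positive measure. Granted the discrete decomposition into countably many types, the rest is standard bookkeeping using definability and non-forking from local stability theory.
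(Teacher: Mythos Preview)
The paper does not actually prove this statement: it records Fact~1.4 as a citation, saying it is, ``after a suitable translation,'' Proposition~1.20 of \cite{Keisler}. So there is no in-paper argument to compare against, and your direct proof via the atomic decomposition of $\mu|\Delta$ is a correct and natural way to supply one.

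One small point worth tightening: your parenthetical that Keisler's Lemma~1.7 is ``reproven as Lemma~2.1 in the next section'' is misleading here. Lemma~2.1 concerns complete $\Delta$-types over the small model $M$, and that $M$ is chosen at the very start of Section~2 \emph{by invoking Fact~1.4}. So appealing to Lemma~2.1 for step~(a) would be circular; what you actually need (and what you use) is Keisler's Lemma~1.7 applied to the global measure, giving a decomposition into complete $\Delta$-types over $\bar M$. Since each such global $\Delta$-type is definable over finitely many parameters (by local stability), the union $\bigcup_i C_i$ is countable, hence certainly small, and the rest of your argument goes through as written.
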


In fact $\mu|\Delta$ will be the unique nonforking extension of $\mu|(\Delta,M)$, so $\mu|\Delta$ will be a ``generically stable" measure in the sense of \cite{HPS}. But all we will really need is the \begin{Fact} A complete $\Delta$-type over a model $M$ has a unique nonforking extension to a global complete $\Delta$-type. 
\end{Fact}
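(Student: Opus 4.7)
My plan is to produce the unique nonforking global extension of $p$ directly from its canonical $\Delta$-defining schema, using the definability of complete $\Delta$-types over models for stable $\Delta$, and then to derive uniqueness from the local version of stationarity. Concretely, since each $\phi(x,y)\in\Delta$ is stable, a standard local stability argument (Section~1.2 of \cite{Pillay}) produces, for each $\phi\in\Delta$, an $L(M)$-formula $d_p\phi(y)$ such that for all $b\in M$, $\phi(x,b)\in p$ iff $M\models d_p\phi(b)$. I then define $\tilde p$ globally by declaring $\phi(x,b)\in\tilde p$ iff $\bar M\models d_p\phi(b)$, for all $\phi\in\Delta$ and $b\in\bar M$, and verify routinely that $\tilde p$ is a complete $\Delta$-type extending $p$.

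Nonforking of $\tilde p$ over $M$ is then immediate from the definition. Given $\phi(x,b)\in\tilde p$, i.e.\ $d_p\phi(b)$, and any $M$-indiscernible sequence $b=b_0,b_1,b_2,\ldots$, all $b_i$ realize $\mathrm{tp}(b/M)$, so $d_p\phi(b_i)$ holds for every $i$ because $d_p\phi$ is an $L(M)$-formula. Hence $\{\phi(x,b_i):i<\omega\}\subseteq\tilde p$ is consistent, so $\phi(x,b)$ does not divide (equivalently, fork) over $M$. This handles existence.

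The uniqueness step is where the real content sits, and I expect it to be the main obstacle. Given another global complete $\Delta$-type $q\supseteq p$ not forking over $M$, one must show $q=\tilde p$, i.e.\ that $q$ is $M$-definable by the same schema $d_p$. The standard approach is: for $\phi\in\Delta$ and $b\in\bar M$, realize $\mathrm{tp}(b/M)$ by an $M$-indiscernible sequence $(b_i)_{i<\omega}$, and use nonforking of $q$ along this sequence together with the fact that all $b_i$ uniformly satisfy or uniformly fail $d_p\phi$, to force $\phi(x,b)\in q$ iff $d_p\phi(b)$ holds. This is the local stability version of stationarity of complete types over models; the needed ingredients (definability of nonforking extensions, and the finite equivalence relation theorem in its local guise) are standard and available in Chapter~1 of \cite{Pillay} and in \cite{Shelah}.
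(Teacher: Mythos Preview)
The paper does not prove this statement: it is recorded as a \emph{Fact}, i.e.\ a standard result from local stability theory taken from the references \cite{Pillay} and \cite{Shelah}, with no argument given. So there is no ``paper's own proof'' to compare your proposal against; the paper simply quotes the result and uses it in the proof of Lemma~2.2.

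Your outline is the standard one, and the existence half is fine (modulo the routine compactness check that the schema $\{d_p\phi\}_{\phi\in\Delta}$ really does define a consistent global $\Delta$-type, which goes through because the schema is over $M$ and $M\prec\bar M$). For uniqueness, however, the sentence ``use nonforking of $q$ along this sequence \ldots\ to force $\phi(x,b)\in q$ iff $d_p\phi(b)$ holds'' is a pointer rather than an argument. Knowing that $q$ does not fork over $M$ does not by itself tell you that the truth value of $\phi(x,b)$ in $q$ is constant as $b$ ranges over an $M$-indiscernible sequence; what you actually need is the intermediate step that any global complete $\Delta$-type nonforking over the model $M$ is $M$-definable (equivalently $\mathrm{Aut}(\bar M/M)$-invariant), after which uniqueness is immediate since two $M$-definable extensions of $p$ must share the schema $d_p$. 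You do cite the right ingredients (definability of nonforking extensions, the local finite equivalence relation theorem) from Chapter~1 of \cite{Pillay}, so the proposal is acceptable as a sketch, but be aware that the uniqueness paragraph as written would not stand on its own as a proof.
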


\vspace{2mm}
\noindent
As in the statement of the theorem we will take $\Delta$ to be $\{R(x,y),x=z\}$ and likewise for $\Delta^{*}$.

\vspace{5mm}
\noindent
{\em Acknowledgement.} Thanks to Sergei Starchenko for comments on a first draft of the paper.

\section{Proof of Theorem 1.1}
First by Fact 1.4, let $M$ be a small model such that $(V,W,R)$ is definable over $M$, and both $\mu|\Delta$, $\nu|\Delta^{*}$ do not fork over $M$. 

\begin{Lemma}  Given $\epsilon > 0$, we can write $V$ as a disjoint union of $\Delta$-formulas (restricted to $V$) over $M$,  $V = V_{1}\cup ... \cup V_{m}$, such that for each $i$ there is a complete $\Delta$-type $p_{i}$ over $M$ such that $\mu(p_{i}) > 0$, $V_{i}\in p_{i}$ (or $p_{i}\subseteq V_{i}$)  and $\mu(V_{i}\setminus p_{i}) \leq \epsilon\mu(V_{i})$.

\end{Lemma}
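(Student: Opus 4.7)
The plan is to prove a finite $\epsilon$-approximation to Keisler's type-decomposition of $\mu$. The single substantive input is the fact (essentially Lemma~1.7 of~\cite{Keisler}, which the authors indicate they will re-prove in the course of this lemma) that
\[
\mu|(\Delta,M) \;=\; \sum_{i\geq 1} \alpha_i\,[p_i],
\]
where the $p_i$ enumerate the (countably many) complete $\Delta$-types over $M$ of positive mass, $[p_i]$ is the $\{0,1\}$-valued measure concentrated at $p_i$, $\alpha_i=\mu(p_i)>0$, and $\sum_i\alpha_i=\mu(V)$ (normalised to $1$). Equivalently, $\mu(\psi)=\sum_{\{i\,:\,\psi\in p_i\}}\alpha_i$ for every $\Delta$-formula $\psi$ over $M$, and in particular $\alpha_i=\inf\{\mu(\phi):\phi\in p_i\}$. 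This is the one place where stability of $R$ is really used, typically via finiteness of $\Delta$-rank.

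Given the decomposition, the rest of the argument is Stone-space bookkeeping. Order the $\alpha_i$'s in decreasing order and pick $N$ with $\alpha_N>0$ and $\sum_{i>N}\alpha_i<\epsilon\alpha_N$ (take $N$ to exhaust the positive $\alpha_i$'s if there are only finitely many of them). Since $p_1,\ldots,p_N$ are distinct points of the Stone space $S_\Delta(M)$, a Hausdorff-separation step produces $\Delta$-formulas $\sigma_1,\ldots,\sigma_N$ over $M$, pairwise disjoint as subsets of $V$, with $\sigma_i\in p_i$ (find separating formulas for each pair, then replace $\sigma_i$ by $\sigma_i\wedge\bigwedge_{j\neq i}\neg\sigma_j$). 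Using $\alpha_i=\inf\{\mu(\phi):\phi\in p_i\}$, conjoin each $\sigma_i$ with a further $\phi_i\in p_i$ so that $\mu(\sigma_i\wedge\phi_i)\leq(1+\epsilon/2)\alpha_i$, and define the partition by $V_i:=\sigma_i\wedge\phi_i$ for $i<N$ and $V_N:=V\setminus\bigcup_{i<N}V_i$.

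The verification is then direct. Each $V_i$ is a $\Delta$-formula over $M$ and the $V_i$ partition $V$ by construction; $V_i\in p_i$ for $i<N$ is immediate, and $V_N\in p_N$ because $\neg\sigma_j\in p_N$ for each $j<N$ by the separation step. For $i<N$ one has $\mu(V_i\setminus p_i)=\mu(V_i)-\alpha_i\leq(\epsilon/2)\alpha_i\leq\epsilon\mu(V_i)$. For $V_N$, using $\mu(V_i)\geq\alpha_i$, we get $\mu(V_N)\leq 1-\sum_{i<N}\alpha_i=\alpha_N+\sum_{i>N}\alpha_i$, so $\mu(V_N\setminus p_N)=\mu(V_N)-\alpha_N\leq\sum_{i>N}\alpha_i<\epsilon\alpha_N\leq\epsilon\mu(V_N)$.

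The real obstacle is not the bookkeeping above but the decomposition itself — i.e.\ the identity $\sum_i\alpha_i=\mu(V)$. That is the genuine local-stability content of the lemma, and is what the $\Delta$-rank induction (following the outline of Keisler's 1.7) is expected to deliver as part of the proof.
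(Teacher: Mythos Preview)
Your bookkeeping contains a real error: the index $N$ with $\sum_{i>N}\alpha_i<\epsilon\alpha_N$ need not exist. Take an equivalence relation $E$ with infinitely many infinite classes, $\Delta=\{E(x,y),x=z\}$, and a measure giving class $i$ mass $\alpha_i=c/i^{2}$ (points mass $0$). Over the standard model $M$ the positive-measure $\Delta$-types are exactly the class generics, with these weights, and one checks $\mu|\Delta$ does not fork over $M$. But $\sum_{i>N}\alpha_i\sim c/N$ while $\alpha_N=c/N^{2}$, so the ratio tends to infinity and no $N$ works for any $\epsilon<1$. So contrary to your last paragraph, the decomposition $\sum_i\alpha_i=\mu(V)$ alone does not make the bookkeeping trivial.

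The fix is easy: dump the remainder into the \emph{largest} piece rather than the smallest. Choose $N$ with $\sum_{i>N}\alpha_i<\epsilon\alpha_1$ (always possible since $\alpha_1>0$ is fixed and the tail tends to $0$), define $V_i$ for $i=2,\ldots,N$ as you do, and set $V_1=V\setminus\bigcup_{i\geq 2}V_i$. The separation step still gives $V_1\in p_1$, and $\mu(V_1\setminus p_1)=\mu(V_1)-\alpha_1\leq 1-\sum_{i=2}^{N}\alpha_i-\alpha_1=\sum_{i>N}\alpha_i<\epsilon\alpha_1\leq\epsilon\mu(V_1)$.

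With that correction your argument is valid but genuinely different from the paper's. The paper proceeds by induction on $R_\Delta(V)$: at each stage only finitely many complete $\Delta$-types have maximal rank, neighbourhoods of those are carved out (absorbing any measure-zero ones into a positive-measure piece), and the complement has strictly smaller rank. This reproves Keisler's Lemma~1.7 along the way rather than invoking it as a black box, which is why the paper advertises Lemma~2.1 as ``more or less'' repeating that proof. Your route is shorter once the decomposition is granted; the paper's is self-contained and makes the role of $\Delta$-rank explicit.
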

\begin{proof} By induction on $R_{\Delta}(V)$ (which is a finite number). If it is $0$, then $V$ is finite and consists of finitely many types, all  realized in $M$, so there is nothing to do. Suppose $R_{\Delta}(V) = n > 0$. Let $p_{1},..,p_{k}$ be the complete $\Delta$-types over $M$ of $R_{\Delta} = n$.  Let $\mu(p_{i}) = \alpha_{i}$. We divide into cases.
\newline
{\em Case (i).}  $\alpha_{i} > 0$ for $i=1,..,k$. 
\newline
Then we can find $V_{i}\in p_{i}$ for $i=1,..,k$ such that $\mu(V_{i}) \leq \alpha_{i}/(1-\epsilon)$ whereby $\mu(V_{i}\setminus p_{i})\leq \epsilon\mu(V_{i})$.  We may assume that the $V_{i}$ are disjoint. Let $U = V_{1}\cup ..\cup V_{k}$. Then $V\setminus U$ has $\Delta$-rank $< n$ so we can apply the induction hypothesis to it, to obtain the lemma for $V$.  (Noting that if $\mu(V\setminus U) = 0$ we can just adjoin it to one of the $V_{i}$.) 

\vspace{2mm}
\noindent
{\em Case (ii).} Some but not all of the $\alpha_{i}$ are $0$.
\newline
Without loss $\alpha_{1},..\alpha_{\ell} > 0$ and $\alpha_{j} = 0$ for $j = \ell + 1,.., k$ where $1\leq \ell < k$.   As in Case (i), find pairwise disjoint $V_{i}\in p_{i}$ for $i = 1,.., \ell$, such that $\mu(V_{i}\setminus p_{i}) \leq \mu(V_{i})$.  Now as $\mu(V_{j}) = 0$ for $j=\ell+1,..,k$ we can find a $\Delta$-definable over $M$ set $V'$ containing $p_{\ell+1}\cup ... 
\cup p_{k}$  and disjoint from $V_{1}\cup ..\cup V_{\ell}$ such that 
$\mu(V') \leq (\epsilon\mu(V_{1})-\mu(V_{1}\setminus p_{1}))/(1-\epsilon)$.  Put $V_{1}' = V_{1}\cup V'$. Then $\mu(V_{1}'\setminus p_{1}) \leq \epsilon\mu(V_{1}')$. Put again $U = 
V_{1}'\cup V_{2} \cup...\cup V_{\ell}$. Then $V\setminus U$ has $\Delta$-rank $< n$ and we can apply induction. 

\vspace{2mm}
\noindent
{\em Case (iii).}  All $\alpha_{i}$ are $0$. 
\newline
By  Lemma 1.7 of \cite{Keisler}, let $p$  be a complete $\Delta$-type over $M$ with $\mu(p) > 0$. So by our case hypothesis, the $\Delta$-rank of $p$ equals $m < n$. Let $W$ be a formula 
in $p$ of $\Delta$-rank $m$.  So $\mu(W) > 0$ and by induction hypothesis we find complete $\Delta$-types $q_{1},..,q_{r}$over $M$ of $\mu$-measure $> 0$,  and pairwise disjoint $W_{i}\in 
q_{i}$ such that $W = W_{1}\cup .. \cup W_{r}$ and $\mu(W_{i}\setminus q_{i}) \leq \mu(W_{i})$ for $i=1,..,r$.  From our case hypothesis we can  find a $\Delta$-formula over $M$, $V'$ 
which contains $p_{1}\cup ....\cup p_{k}$ and is disjoint from $W$ such that   $\mu(V') \leq (\epsilon\mu(W_{1})-\mu(W_{1}\setminus q_{1}))/(1-\epsilon)$.  Put $W_{1}' = W_{1}\cup V'$ and as above
$\mu(W_{1}'\setminus q_{1}) \leq \epsilon\mu(W_{1}')$.  Again let $U = W_{1}'\cup W_{2} \cup .. \cup W_{r}$. Then the $\Delta$-rank of $V\setminus U$ is $< n$ so we can apply induction to complete the proof in case (iii). 

\end{proof}

We can do the same thing for $W$ to write $W$ as a disjoint union of $\Delta^{*}$-definable (over $M$)  sets such that for each $j$ there is a complete
$\Delta^{*}$-type $q_{j}$ over $M$ such that $\nu(W_{j}\setminus q_{j}) \leq \epsilon\nu(W_{j})$.

\begin{Lemma} For each $i=1,..,m$ and $j=1,..,n$, we have (i) or (ii) of Theorem 1.1
\end{Lemma}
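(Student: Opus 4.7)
My plan is to fix a pair $(i,j)$, extract a clean binary dichotomy from stability, and then use the weighted-sum decomposition of Keisler's Lemma 1.7 (in its form over enlarged parameter sets) to translate the dichotomy into the measure-theoretic statements (i) or (ii) of Theorem 1.1.

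First I would observe that since $\mu(p_i) > 0$ and $\mu|\Delta$ does not fork over $M$ (Fact 1.4), the type $p_i$ does not fork over $M$; Fact 1.5 then furnishes a unique global nonforking extension $\tilde p_i$, and analogously $\tilde q_j$ on the $W$ side. Stability and the definability of types provides a $\Delta^*$-formula $\phi_i(y)$ over $M$ with $R(x,b) \in \tilde p_i \iff \models \phi_i(b)$, and dually a $\Delta$-formula $\psi_j(x)$ over $M$ with $R(a,y) \in \tilde q_j \iff \models \psi_j(a)$. Because $\phi_i$ is a $\Delta^*$-formula over $M$ and $q_j$ is a complete $\Delta^*$-type over $M$, exactly one of $\phi_i, \neg\phi_i$ lies in $q_j$; similarly for $\psi_j$ against $p_i$. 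A central appeal to the symmetry of nonforking for stable types (equivalently, the commutativity of the Morley product $\tilde p_i \otimes \tilde q_j$) yields $\phi_i \in q_j \iff \psi_j \in p_i$, splitting into Case A and Case B.

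In Case A (say $\phi_i \in q_j$, equivalently $\psi_j \in p_i$) I would argue conclusion (i) as follows. Since $q_j \subseteq \phi_i(\bar M)$, we have $\nu(W_j \setminus \phi_i(\bar M)) \leq \nu(W_j \setminus q_j) \leq \epsilon\nu(W_j)$, so it suffices to treat $b \in W_j$ with $\phi_i(b)$, i.e.\ $R(x,b) \in \tilde p_i$. For such $b$, the measure $\mu|(\Delta, Mb)$ does not fork over $M$, and hence by the relative form of Keisler's Lemma 1.7 over $Mb$ it decomposes as $\sum_p \mu(p)\,\delta_{\tilde p|Mb}$, the sum ranging over complete $\Delta$-types $p$ over $M$. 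Evaluated on the $\Delta$-formula $V_i \wedge \neg R(x,b)$, only types $p$ with $V_i \in p$ contribute, and the $p = p_i$ summand vanishes because $R(x,b) \in \tilde p_i$; what remains is at most $\sum_{p \neq p_i,\, V_i \in p} \mu(p) = \mu(V_i) - \mu(p_i) \leq \epsilon\mu(V_i)$. This is precisely the first half of (i). The dual half follows identically from $\psi_j \in p_i$, decomposing $\nu|(\Delta^*, Ma)$ instead. Case B is handled by the same arguments with $R$ replaced by $\neg R$, yielding (ii).

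The main obstacle is establishing the symmetry $\phi_i \in q_j \iff \psi_j \in p_i$, the only step beyond what Facts 1.4--1.5 directly deliver. This is standard local stability: both sides express that $R(x,y)$ lies in the (symmetric) Morley product of the stable nonforking global types $\tilde p_i$ and $\tilde q_j$, but it invokes the basic fact that two $M$-nonforking stable types admit a unique common nonforking extension with commutative Morley product. I would cite this from Chapter 1 of \cite{Pillay}. Modulo this, the remaining steps are routine applications of the Keisler decomposition together with the elementary bound $\mu(V_i) - \mu(p_i) \leq \epsilon \mu(V_i)$ supplied by Lemma 2.1.
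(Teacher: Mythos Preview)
Your proposal is correct and follows essentially the same route as the paper: split on whether the $R$-defining formula of $p_i$ lies in $q_j$, invoke the symmetry of definitions from local stability (the paper cites Lemma 2.8 of \cite{Pillay} for exactly your equivalence $\phi_i\in q_j\iff \psi_j\in p_i$), and then bound the exceptional set via $\mu(V_i\setminus p_i)\leq\epsilon\mu(V_i)$ from Lemma 2.1. The only cosmetic difference is that where you pass through the Keisler decomposition of $\mu|(\Delta,Mb)$, the paper argues more directly that $p_i\cup\{\neg R(x,b)\}$ forks over $M$ (Fact 1.5) and hence has $\mu$-measure zero (Fact 1.4), which immediately gives $\mu(V_i\wedge\neg R(x,b))\leq\mu(V_i\setminus p_i)$.
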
 
\begin{proof} Fix $i,j$ and we have complete $\Delta$-type $p_{i}(x)$ over $M$ and complete $\Delta^{*}$-type $q_{j}$ over $M$. Now $p_{i}$ is definable, and  its $R$-definition is given by a $\Delta^{*}$-formula (1.27 of \cite{Pillay}). Namely there is a $\Delta^{*}$-formula $\psi(y)$ over $M$ such that for $b\in W(M)$, $R(x,b)\in p_{i}(x)$ iff $\models \psi(b)$.  Likewise if $p_{i}'$ is the unique nonforking  extension of $p_{i}$ to a complete global $\Delta$-type, $\psi(y)$ is the $R(x,y)$-definition of $p_{i}'$.  We have two cases:
\newline
Case (i):  $\psi(y)\in q_{j}$.
\newline
Hence for all $b\in W_{j}$ other than a set a set of $\nu$-measure $\leq \epsilon \nu(W_{j})$ we have $\models \psi(b)$. 
Now suppose that $\models \psi(b)$, hence $R(x,b)\in p'$, whereby $p\cup \{\neg R(x,b)\}$ divides over $M$ (by Fact 1.5) so as $\mu$ does not divide over $M$,  $\mu(p\cup\{\neg(R(x,b)\}) = 0$, so for all $a\in V_{j}$ outside a set of $\mu$-measure $\leq \epsilon \mu(V_{i})$, we have $\models R(a,b)$.  We have actually proved the second clause of  (i) of the theorem. To obtain the first clause, let $\chi(x)$ be the $\Delta^{*}$-definition of $q_{j}$, so by Lemma 2.8 of \cite{Pillay}, and our case hypothesis, $\chi(x)$ (a $\Delta$-formula over $M$) is in $p_{i}$. Continue as above.

\vspace{2mm}
\noindent
Case (ii): $\neg\psi(y)\in q_{j}$. In which case we obtain as in Case (i) that for all $a\in V_{i}$ outside a set of $\mu$-measure $\leq \epsilon\mu(V_{i})$, for all $b\in W_{j}$ outside a set of $\nu$-measure $\leq \epsilon \nu(W_{j})$ we have $\neg R(a,b)$.  

This completes the proof.

\end{proof} 

Theorem 1.1 follows directly from Lemmas 2.1 and 2.2.

\end{document}